\theoremstyle{plain} 
\newtheorem{theorem}{Theorem} 
\newtheorem{lemma}[theorem]{Lemma} 
\DeclareMathOperator{\mre}{Re}
\begin{document}

\author{Andriy Bondarenko}
\address{Andriy Bondarenko\\
Department of Mathematical Sciences \\
Norwegian University of Science and Technology \\
NO-7491 Trondheim \\
Norway} \email{andriybond@gmail.com}

\author{Ole Fredrik Brevig} \address{Ole Fredrik Brevig \\
Department of Mathematical Sciences \\
Norwegian University of Science and Technology \\
NO-7491 Trondheim \\
Norway} \email{ole.brevig@ntnu.no}

\author{Eero Saksman} \address{Eero Saksman \\
Department of Mathematics and Statistics \\
University of Helsinki \\
FI-00170 Helsinki \\
Finland} \email{eero.saksman@helsinki.fi}

\author{Kristian Seip} \address{Kristian Seip\\Department of Mathematical Sciences \\
Norwegian University of Science and Technology \\
NO-7491 Trondheim \\
Norway} \email{kristian.seip@ntnu.no}

\author{Jing Zhao} \address{Jing Zhao\\Department of Mathematical Sciences \\
Norwegian University of Science and Technology \\
NO-7491 Trondheim \\
Norway} \email{jingzh95@me.com}

\thanks{The research of Bondarenko, Brevig, Seip, and Zhao is supported by Grant 227768 of the Research Council of Norway. The research of Saksman is supported by the Finnish Academy  Center of Excellence ``Analysis and Dynamics''.}

\title{Pseudomoments of the Riemann zeta function}

\subjclass[2010]{Primary 11M99. Secondary 11K70, 42B30.}

\begin{abstract}
The $2$kth pseudomoments of the Riemann zeta function $\zeta(s)$ are, following Conrey and Gamburd, the $2k$th integral moments of the partial sums of $\zeta(s)$ on the critical line. For fixed $k>1/2$, these moments are known to grow like $(\log N)^{k^2}$, where $N$ is the length of the partial sum, but the true order of magnitude remains unknown when $k\le 1/2$. We deduce new Hardy--Littlewood inequalities and apply one of them to improve on an earlier asymptotic estimate when $k\to\infty$. In the case $k<1/2$, we consider pseudomoments of $\zeta^{\alpha}(s)$ for $\alpha>1$ and the question of whether the lower bound $(\log N)^{k^2\alpha^2}$ known from earlier work yields the true growth rate. Using ideas from recent work of Harper, Nikeghbali, and Radziwi{\l\l} and some probabilistic estimates of Harper, we obtain the somewhat unexpected result that these pseudomements are bounded below by $\log N$ to a power larger than $k^2\alpha^2$ when $k<1/e$ and $N$ is sufficiently large.    
\end{abstract}

\maketitle

\section{Introduction}
An important and longstanding problem in the theory of the Riemann zeta function $\zeta(s)$ is to compute the moments
\[M_k(T):=\frac{1}{T}\int_T^{2T}|\zeta(1/2+it)|^{2k}dt \]
for large $T$ and all $k>0$. One expects that
\[ M_k(T) \sim A_k (\log T)^{k^2}\]
for some constant $A_k$ for which one even has precise predictions \cite{KS00}. This asymptotic behavior is only known to hold when $k=1,2$ by results of respectively Hardy and Littlewood \cite{HL16} and Ingham \cite{Ingham26}. An unconditional lower bound $M_k(T)\gg (\log T)^{k^2}$ is known in the range $k\ge 1$ \cite{RS13}, and this holds conditionally for all $k>0$ by work of Ramachandra (see~\cite{Ram1,Ram2}) and Heath-Brown \cite{HB81}. The optimal upper bound $M_k(T)\ll (\log T)^{k^2}$ has been established unconditionally for $k=1/n, 1+1/n$ ($n$ a positive integer) by results of respectively Heath-Brown \cite{HB81} and Bettin, Chandee, and Radziwi{\l\l} \cite{BCR}. Harper~\cite{Har1}, building and improving on work of Soundararajan~\cite{Sound1}, showed that the upper bound of optimal order holds conditionally for all $k>0$.

By the classical approximation
\[\zeta(\sigma+i t)= \sum_{n\le x} n^{-\sigma-it} - \frac{x^{1-\sigma-it}}{1-\sigma-it}+O\left(x^{-\sigma}\right),\]
which holds uniformly in the range $\sigma\ge \sigma_0>0$, $|t|\le x$ (see \cite[Thm.~4.11]{T}), the problem of computing $M_k(T)$ can be recast as the problem of computing
\[  \frac{1}{T}\int_T^{2T}|Z_N(it)|^{2k}dt, \] 
where $Z_N(s) := \sum_{n\leq N} n^{-1/2-s}$ and $N=\lfloor 2T\rfloor$.  
By the Bohr correspondence, which for every prime $p$ allows us to associate $p^{-it}$ with an independent Steinhaus random variable, we may think of the interval $[T,2T]$ as a subset of $\mathbb{T}^{\infty}$. An interesting question is then to understand the distribution of $Z_N(i t)$ for fixed $N$ on the entire torus $\mathbb{T}^{\infty}$ and, in particular, to compare with what we have on the subset $[T,2T]$. Following Conrey and Gamburd \cite{CG06}, we refer to the corresponding integral moments
\[\Psi_k(N):= \lim_{T\to\infty} \frac{1}{T} \int_{0}^{T} \left| Z_N(it)\right|^{2k} dt 
\]
for $k>0$ as the $2k$th pseudomoments of the Riemann zeta function $\zeta(s)$. 

Conrey and Gamburd found that 
\begin{equation}\label{eq:CG} 
	\Psi_k(N) = C_k (\log{N})^{k^2} + O\left((\log{N})^{k^2-1}\right) 
\end{equation}
when $k$ is an integer. Here $C_k=a_k \gamma_k$, with $a_k$ an arithmetic factor defined by
\[a_k := \prod_{p} \left(1-\frac{1}{p}\right)^{k^2}\sum_{j=0}^\infty \frac{\binom{j+k-1}{j}^2}{p^j}\]
and $\gamma_k$ the volume of a convex polytope. In the non-integer case, it is known from \cite{BHS15} that 
\begin{equation}\label{eq:gtr2} 
	\Psi_k(N) \asymp_k (\log{N})^{k^2}, \qquad k>1/2, 
\end{equation}
and that 
\begin{equation}\label{eq:low2} 
	\Psi_k(N) \gg_k (\log{N})^{k^2}, \qquad k>0. 
\end{equation}
In the range $k\le 1/2$, however, the results of \cite{BHS15} imply only that $\Psi_{1/2}(N)\ll (\log\log N)(\log N)^{1/4} $ and that
\[\Psi_k(N) \ll_k (\log{N})^{k/2}, \qquad 0<k<1/2.\]
The upper bounds of \cite{BHS15} were established by means of Helson's generalization of the M. Riesz theorem on the conjugation operator \cite{H1}, while the lower bounds were deduced from certain Hardy--Littlewood inequalities established in \cite{BHS15}. Recently, by replacing the use of Helson's theorem by a method involving an appropriate mollifying of $Z_N(s)$, Heap \cite{Heap} was able to get the much improved bound
\[ \Psi_k(N)\ll (\log N)^{\alpha_k} (\log\log N)^{1/2-\alpha_k}, \quad 0<k\le 1/2,\]
with $\alpha_k=k/(4(1-k))$.

The methods of \cite{BHS15} produced a lower bound of super-exponential decay and an upper bound of super-exponential growth for $\Psi_k(N)/(\log N)^{k^2}$ when $k\to \infty$. In view of \eqref{eq:CG}, and the well-known asymptotic expansion 
\[\log{a_k} = -k^2\log(2e^\gamma \log{k}) + O\left(\frac{k^2}{\log{k}}\right)\]
one suspects that super-exponential decay is correct, and this was conjectured in \cite[Sec.~5]{BHS15}. We will demonstrate that this is true, by replacing the estimates coming from Helson's theorem \cite{H1} with a new Hardy--Littlewood inequality. We also include additional details in the computation of the lower estimate from \cite{BHS15} to obtain an explicit lower bound for comparison.
\begin{theorem}\label{thm:zetapseudo} 
	Suppose that $k\ge 1$. Then 
	\begin{align*}
		\limsup_{N\to\infty}\frac{\Psi_k(N)}{(\log N)^{k^2}} &\leq \frac{1}{\Gamma(k+1)^{k}}\,\prod_{p}\left(1-\frac{1}{p}\right)^{k^2}\left(1-\frac{k}{\lfloor k\rfloor}\frac{1}{p}\right)^{-k\lfloor k\rfloor}, \\
		\liminf_{N\to\infty}\frac{\Psi_k(N)}{(\log N)^{k^2}} &\geq \frac{1}{\Gamma\left(\lfloor 2k\rfloor k+1\right)^{\frac{k}{\lfloor 2k\rfloor}}}\,\prod_{p} \left(1-\frac{1}{p}\right)^{k^2} \left(1 + \lfloor 2k \rfloor k \,\frac{1}{p}\right)^\frac{k}{\lfloor 2k \rfloor}. 
	\end{align*}
	In particular, as $k \to \infty$, we get that 
	\begin{equation}\label{eq:squeeze} 
		\exp\left(\left(-2 + o(1)\right)k^2\log k\right) \ll \frac{\Psi_k(N)}{(\log N)^{k^2}} \ll \exp\left(\left(-1 + o(1)\right)k^2\log k\right). 
	\end{equation}
\end{theorem}
It is interesting to observe the similarity between the lower bound in \eqref{eq:squeeze} and the unconditional bound
\[\frac{M_k(T)}{(\log T)^{k^2}}\gg \exp\left(\left(-2 + o(1)\right)k^2\log k\right)\]
obtained by Radziwi{\l\l} and Soundararajan \cite{RS13}. Likewise, we observe that the upper bound in \eqref{eq:squeeze} is in agreement with the expected behavior
\[\frac{M_k(T)}{(\log T)^{k^2}}\asymp \exp\left(\left(-1 + o(1)\right)k^2\log k\right)\]
conjectured by Keating and Snaith \cite{KS00} (see also \cite{CG01}). 

The range $0< k \le 1/2$ remains unsettled. In this paper, we consider the closely related problem of computing the pseudomoments of $\zeta^{\alpha}(s)$ for $\alpha>1$. The somewhat surprising conclusion is that, in this case, the lower bound obtained from the Hardy--Littlewood inequality of \cite{BHS15} does not give the right asymptotic order for small $k$. To state this result, we start from the Dirichlet series  
\[ \zeta^{\alpha}(s)=\sum_{n=1}^{\infty} d_{\alpha}(n) n^{-s} \]
and the corresponding partial sums $Z_{N,\alpha}(s):=\sum_{n\le N}d_{\alpha}(n) n^{-s-1/2}$, and 
define the pseudomoments of $\zeta^{\alpha}(s)$ as 
\[\Psi_{k,\alpha}(N):= \lim_{T\to \infty} \frac{1}{T} \int_0^T |Z_{N, \alpha}(i t)|^{2k} dt.\] 
We know\footnote{The techniques used in the proof of Theorem~\ref{thm:zetapseudo} can also be used to improve the estimates for the constants in \eqref{dream} and other examples from \cite{BHS15} when $k>1/2$. We omit the details.} from \cite{BHS15} that  
\begin{equation}\label{dream} 
	\Psi_{k,\alpha}(N) \asymp (\log N)^{k ^2 \alpha^2} 
\end{equation}
when $k>1/2$. However, for small $k$ this asymptotic relation fails.
\begin{theorem}\label{th1} 
	Suppose that $\alpha\ge 1$. For every $0<k<1/2$, there exists a positive constant $c(k,\alpha)$ such that
	\[\Psi_{k,\alpha}(N) \gg (\log N)^{k \log \alpha^2}\exp\left(-c(k,\alpha)\sqrt{\log\log N\log\log\log N}\right)\]
	holds for arbitrarily large $N$. 
\end{theorem}
This is incompatible with \eqref{dream} when $\alpha>1$ and $k<(\log \alpha^2)/\alpha^2$. Indeed, we observe that, whenever $k<1/e$, we can find $\alpha>1$ such that \eqref{dream} fails. Hence, while our results for $k>1/2$ shows a striking similarity between $\Psi_k(N)$ and the conjectured asymptotics of $M_k(N)$, Theorem~\ref{th1} reveals a different situation for small $k$. If we agree that the moments of $\zeta^{\alpha}(s)$ are just the moments of $|\zeta(s)|^{\alpha}$, then we may phrase this state of affairs in the following way: Theorem~\ref{th1} reveals that there is a discrepancy between the behavior of the pseudomoments and the moments of $\zeta^\alpha(s)$ for small $k$ when $\alpha>1$. (Here we compare with Heath-Brown's unconditional result for $k=1/n$ \cite{HB81} or Harper's conditional result for all $k>0$ \cite{Har1}.)

We may think of the problem of computing the pseudomoments $\Psi_k(N)$ in at least two different ways. From a functional analytic point of view, the underlying operator is that of partial sums acting on Hardy spaces of Dirichlet series: 
 \[ S_N\left(\sum_{n=1}^{\infty} a_n n^{-s}\right):=\sum_{n=1}^{N} a_n n^{-s}. \]
Following Bayart \cite{Bayart02}, we define the Hardy space $\mathcal{H}^q$ for $0<q<\infty$ by taking the closure of all Dirichlet polynomials $f$ with respect to the norm (or quasi-norm when $0<q<1$)
\[\|f\|_q:=\left(\lim_{T\to\infty} \frac{1}{T} \int_0^T |f(it)|^q dt  \right)^{1/q}.\]
The aforementioned theorem of Helson \cite{H1} shows that when $1<q<\infty$, there is a uniform bound on the norm of $S_N$ when it acts on $\mathcal{H}^q$. We refer to the forthcoming paper \cite{BBSS18} for some new estimates on the growth of the norm of $S_N$ in the interesting range $0<q\le 1$. The computation of the pseudomoments deals with the special situation when $S_N$ acts on functions with a strong multiplicative structure. Specifically, we may write
\begin{equation}\label{eq:finite} Z_N(s)=S_N \left( \prod_{p\le N} \left(1-p^{-1/2-s}\right)^{-1}\right); \end{equation}
the $\mathcal{H}^{2k}$ norm of the finite Euler product on which $S_N$ acts, can be estimated plainly, and hence the problem of computing $\Psi_k(N)$ and, in particular, the question of whether \eqref{eq:gtr2} holds, can be thought of as: How much does the ``additive'' operator $S_N$ distort the norm of the finite Euler product in \eqref{eq:finite}? Theorem~\ref{th1} indicates that the distortion becomes severe for small $k$.

Alternatively, we may think of our problem in probabilistic terms. We then associate with every prime $p$ a Steinhaus random variable $z(p)$, i.e., $z(p)$ is a random variable that is equidistributed on the unit circle. Assuming the random variables $z(p)$ to be independent, we define $z(n)$ by requiring it to be a completely multiplicative function for every point in our probability space. The relation to our problem of computing $\Psi_k(N)$ is given by the well-known norm identity
\[{\mathbb E} \left(\left|\sum_{n\le N} \frac{z(n)}{\sqrt{n}}\right|^q\right) =\lim_{T\to \infty} \frac{1}{T} \int_0^T \left|Z_N(it) \right|^q dt,\]
valid for all $q>0$ (see \cite[Sec.~3]{SS09}). Using this terminology, we may think of $Z_N$ as a sum of random multiplicative functions. A probabilistic approach to problems of computing integral moments, based on this viewpoint, can be found in recent work of Harper \cite{Har2,Harper} and Harper, Nikeghbali, and Radziwi{\l\l} \cite{HNR}. 

In many situations, both approaches apply equally well, but sometimes one viewpoint is more illuminating and profitable than the other, and occasionally it is useful to combine them.  
The reader may notice that we take the functional analytic point of view in our proof of Theorem~\ref{thm:zetapseudo}. Here the main point is to find appropriate substitutes for conventional results such as  Helson's version of the M. Riesz theorem and Riesz--Thorin interpolation, adapted to the multiplicative structure of our problem. The basic tool to be used, called Hardy--Littlewood inequalities, will be developed in the next section; the proof of Theorem~\ref{thm:zetapseudo} then follows in Section~\ref{se:euler}. The proof of Theorem~\ref{th1}, on the other hand, to be found Section~\ref{se:harper}, relies on the probabilistic approach and ideas found in \cite{Har2} and \cite{HNR}. 

\section{Hardy--Littlewood inequalities}\label{sec:HL}
The canonical example of the type of inequality we are interested in is Helson's inequality \cite{Helson06}, which states that
\begin{equation} \label{eq:Helson}
	\left(\sum_{n=1}^N \frac{|a_n|^2}{d(n)}\right)^\frac{1}{2} \leq \|f\|_1,
\end{equation}
for Dirichlet polynomials $f(s) = \sum_{n=1}^N a_n n^{-s}$. Here $d(n)$ denotes the divisor function. We may define the general divisor function $d_\alpha(n)$ for $\alpha\geq1$ by the rule 
\begin{equation}\label{eq:dadef} 
	\zeta^\alpha(s) = \sum_{n=1}^\infty d_\alpha(n) n^{-s}, \qquad \sigma>1, 
\end{equation}
which was tacitly assumed in the introduction. If $k$ is an integer, then it is clear that $d_k(n)$ denotes the number of ways we may write $n$ as a product of $k$ positive integers, since
\begin{equation} \label{eq:dkn}
	d_k(n) = \sum_{n_1\cdots n_k = n} 1.
\end{equation}
In particular, $d(n) = d_2(n)$. Another basic observation is that $d_\alpha(n)$ is a multiplicative function, which means that it is completely determined by its values at powers of the prime numbers. The Euler product formula for $\zeta^{\alpha}(s)$ shows that, in fact, 
\begin{equation}\label{eq:multext} 
	d_{\alpha}(p^j)=\binom{j+\alpha-1}{j}
\end{equation}
for every prime $p$ and every nonnegative integer $j$. The submultiplicative estimate 
\begin{equation} \label{eq:logconvex}
	d_\alpha(mn) \leq d_\alpha(m)d_\alpha(n)
\end{equation}
follows at once from \eqref{eq:multext}.
Our starting point is the following extension of Helson's inequality \eqref{eq:Helson}, which corresponds to the case $k=2$ in \eqref{eq:lowerk}.

\begin{lemma} \label{lem:integers}
	Let $f(s) = \sum_{n=1}^N a_n n^{-s}$ and let $k$ be a positive integer. Then
	\begin{align}
		\left(\sum_{n=1}^N \frac{|a_n|^2}{d_k(n)}\right)^\frac{1}{2} &\leq \|f\|_{2/k}, \label{eq:lowerk}\\
		\|f\|_{2k} &\leq \left(\sum_{n=1}^N |a_n|^2 d_k(n)\right)^\frac{1}{2}. \label{eq:upperk}
	\end{align}
\end{lemma}

\begin{proof}
	As noted in \cite[pp.~203--204]{BHS15}, the inequality \eqref{eq:lowerk} follows from \cite[Cor. 3.4]{Burbea87} and the argument used in \cite{BHS15,Helson06}. The inequality \eqref{eq:upperk} is an improved version of \cite[Lem.~8]{Seip13}, which we now deduce. Adopting the notational convention $a_n = 0$ if $n\geq N$, we expand to find that
	\begin{align*}
		\|f\|_{2k}^{2k} = \|f^k\|_2^2 &= \sum_{n=1}^{N^k} \left|\sum_{n_1\cdots n_k = n} a_{n_1}\cdots a_{n_k}\right|^2\leq \sum_{n=1}^{N^k} d_k(n) \sum_{n_1\cdots n_k = n} |a_{n_1}|^2 \cdots |a_{n_k}|^2
		\intertext{by the Cauchy--Schwarz inequality and \eqref{eq:dkn}. We then apply \eqref{eq:logconvex} and obtain}
	\| f\|_{2k}^{2k}	&\leq \sum_{n=1}^{N^k} \sum_{n_1\cdots n_k = n} d_k(n_1) |a_{n_1}|^2 \cdots d_k(n_k) |a_{n_k}|^2 = \left(\sum_{n=1}^N |a_n|^2 d_k(n)\right)^k. \qedhere
	\end{align*}
\end{proof}

It seems likely that Lemma~\ref{lem:integers} holds for any $k\geq1$, but as far as we know this is still an open problem. In \cite{BHS15,Seip13}, different techniques were used to circumvent this.\footnote{The Hardy--Littlewood inequalities in \cite{BHS15,Seip13} are stated with a weight of the form $[d(n)]^\beta$, where $d(n)=d_2(n)$ denotes the usual divisor function. The difference between $[d(n)]^\beta$ and $d_\alpha(n)$ is marginal, but we have found it more natural to use $d_\alpha(n)$.} Specifically, it is proved in \cite{BHS15} that \eqref{eq:lowerk} holds if we only consider square-free integers in the lower bound. Using the M\"{o}bius function $\mu(n)$, which is the multiplicative function that is $0$ if $n$ is not square-free and $-1$ at each prime number, the Hardy--Littlewood inequality of \cite{BHS15} can be written as 
\begin{equation}\label{eq:BHSineq} 
	\left(\sum_{n=1}^N |a_n|^2 \, \frac{|\mu(n)|}{d_{2/q}(n)}\right)^\frac{1}{2} \leq \|f\|_q,
\end{equation}
for $0<q\leq2$. In \cite{Seip13}, Riesz--Thorin interpolation between the integers $k$ in \eqref{eq:upperk} is used to the prove that
\[\|f\|_q \leq \left(\sum_{n=1}^N |a_n|^2 d_\alpha(n)\right)^\frac{1}{2},\]
for $q\geq2$, where $\alpha=\alpha(q)>q/2$ (unless $q/2$ is an integer).

Our novel approach is to interpolate between the results in Lemma~\ref{lem:integers} using instead a completely multiplicative weight. The interpolation will be facilitated by a version of Weissler's inequality \cite{Weissler80} for Dirichlet polynomials. For $f(s) = \sum_{n=1}^N a_n n^{-s}$, define
\[\mathcal{W}_\varrho f(s) := \sum_{n=1}^N \varrho^{\Omega(n)} a_n n^{-s}.\]
The following result is deduced in \cite[Sec.~3]{Bayart02}.
\begin{lemma} \label{lem:weissler}
	Suppose that $0<q_1\leq q_2 < \infty$ and let $0<\varrho \leq \sqrt{q_1/q_2}$. Then
	\[\|\mathcal{W}_\varrho f\|_{q_2} \leq \|f\|_{q_1},\]
	for every Dirichlet polynomial $f(s) = \sum_{n=1}^N a_n n^{-s}$.
\end{lemma}

Our replacement for $d_\alpha(n)$ will be the multiplicative function 
\begin{equation}\label{eq:multmixweight} 
	\Phi_\alpha(n) := d_{\lfloor \alpha \rfloor}(n)\, \left(\frac{\alpha}{\lfloor \alpha \rfloor}\right)^{\Omega(n)}, 
\end{equation}
where $\Omega(n)$ denotes the number of prime factors in $n$, counting multiplicity. Observe that $\Phi_\alpha(n)=d_\alpha(n)$ whenever $\alpha$ is an integer. Note also that $\Phi_\alpha(n) =d_\alpha(n)$ if $n$ is square-free. We will prove that $\Phi_\alpha(n)$ has the same average order as $d_\alpha(n)$, a fact that for our purposes makes it a satisfactory substitute. Only the bound for $q\ge 2$ in the following theorem will be used in the proof of Theorem~\ref{thm:zetapseudo}. Since the proofs are similar, and both bounds are of intrinsic interest, we have found it natural to treat the whole range $0<q<\infty$; the bound for $q\le 2$ will find applications in \cite{BBSS18}.

\begin{theorem}\label{thm:multmixedineqs} 
	If $f(s) = \sum_{n=1}^N a_n n^{-s}$, then 
	\begin{align}
		\left(\sum_{n=1}^N \frac{|a_n|^2}{\Phi_{2/q}(n)}\right)^\frac{1}{2} &\leq \|f\|_q, & q\leq 2, \label{eq:multmix2} \\
		\|f\|_q &\leq \left(\sum_{n=1}^N |a_n|^2 \Phi_{q/2}(n)\right)^\frac{1}{2}, & q\geq2. \label{eq:multmix1} 
	\end{align}
\end{theorem}
\begin{proof}
	We begin with \eqref{eq:multmix2}. In light of Lemma~\ref{lem:integers}, we may assume there is some positive integer $k$ such that \[\frac{2}{k+1} < q < \frac{2}{k}.\] In particular, $k = \lfloor 2/q \rfloor$. We therefore apply Lemma~\ref{lem:weissler} with
	\[\varrho = \sqrt{q/(2/k)} = \sqrt{\frac{q}{2}\left \lfloor \frac{2}{q} \right\rfloor}\]
	and \eqref{eq:lowerk} to obtain
	\[\|f\|_q \geq \|\mathcal{W}_\varrho f\|_{2/k} \geq \left(\sum_{n=1}^N \frac{|a_n|^2 \varrho^{2\Omega(n)}}{d_k(n)}\right)^\frac{1}{2} = \left(\sum_{n=1}^N \frac{|a_n|^2}{\Phi_{2/q}(n)}\right)^\frac{1}{2}.\]
	For the proof of \eqref{eq:multmix1}, we may assume that $2k < q < 2(k+1)$ since $q\geq2$. We use Lemma~\ref{lem:weissler} (in reverse) with $\varrho = \sqrt{2k/q}$ and \eqref{eq:upperk} to conclude that
	\[\|f\|_q \leq \|\mathcal{W}_\varrho^{-1} f\|_{2k} \leq \left(\sum_{n=1}^N |a_n|^2 \varrho^{-2\Omega(n)} d_k(n) \right)^\frac{1}{2} = \left(\sum_{n=1}^N |a_n|^2 \Phi_{q/2}(n)\right)^\frac{1}{2}. \qedhere\]
\end{proof}

From \eqref{eq:dadef} it follows by the Selberg--Delange method (see e.g.~\cite[Ch.~II.5]{Tenenbaum}) that the average order of $d_\alpha(n)$ is given by 
\begin{equation}\label{eq:avgdiv} 
	\frac{1}{N} \sum_{n=1}^N d_\alpha(n) = \frac{1}{\Gamma(\alpha)}(\log{N})^{\alpha-1} + O\left((\log{N})^{\alpha-2}\right). 
\end{equation}
We will now show that $\Phi_\alpha(n)$ has the same average order, up to a bounded factor. To that end, we consider the associated Dirichlet series and factor out a suitable power of $\zeta(s)$ from the Euler product, to obtain
\[\mathcal{F}_\alpha(s) := \sum_{n=1}^\infty \Phi_\alpha(n) n^{-s} = \zeta^\alpha(s) \prod_p \left(1-p^{-s}\right)^\alpha \left(\sum_{j=0}^\infty \Phi_\alpha(p^j)\, p^{-js}\right).\]
For $|z|<\lfloor \alpha \rfloor/\alpha$, it is now convenient to set 
\begin{equation}\label{eq:genfunc} 
	G_\alpha(z) := (1-z)^\alpha \sum_{j=0}^\infty \Phi_\alpha(p^j) z^j = (1-z)^\alpha\left(1-\frac{\alpha}{\lfloor \alpha \rfloor}z\right)^{-\lfloor \alpha \rfloor} ,
\end{equation}
so that $\mathcal{F}_\alpha(s)=\zeta^\alpha(s)\mathcal{G}_\alpha(s)$, where
\[\mathcal{G}_\alpha(s) := \prod_{p} G_\alpha(p^{-s}).\]
To prove the desired average order result for $\Phi_{\alpha}(n)$, we require the following simple estimates. 
\begin{lemma}\label{lem:Gaest} 
	If $\alpha\geq1$ and $0 \leq x < \lfloor \alpha \rfloor/\alpha$, then 
	\begin{equation}\label{eq:decreasing} 
		G_{\alpha+1}(x) \leq G_\alpha(x). 
	\end{equation}
	Moreover, $G_\alpha$ enjoys uniform estimates for $0\leq x \leq 1/2$,
	\[1 \leq G_\alpha(x) \leq 1 + x^2 
	\begin{cases}
		16(\alpha-1)/(2-\alpha)^3, & 1\leq \alpha < 2, \\
		384, & \alpha\geq 2. 
	\end{cases}
	\]
\end{lemma}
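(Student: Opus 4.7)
For \eqref{eq:decreasing}, first note that $\lfloor \alpha + 1 \rfloor = \lfloor \alpha \rfloor + 1$ for every $\alpha \geq 1$. Writing $\beta = \lfloor \alpha \rfloor$ and taking logarithms, the inequality is equivalent to
\[ (\beta+1)\log\Bigl(1 - \tfrac{\alpha+1}{\beta+1}x\Bigr) \geq \log(1-x) + \beta \log\Bigl(1 - \tfrac{\alpha}{\beta}x\Bigr). \]
The key observation is the identity $\frac{\alpha+1}{\beta+1} = \frac{1}{\beta+1}\cdot 1 + \frac{\beta}{\beta+1}\cdot \frac{\alpha}{\beta}$, which exhibits $\frac{\alpha+1}{\beta+1}$ as a convex combination of $1$ and $\alpha/\beta$. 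Since $t \mapsto \log(1-tx)$ is concave, Jensen's inequality immediately yields the estimate. The inequality $x < \beta/\alpha \leq (\beta+1)/(\alpha+1)$ (the second inequality reducing to $\alpha \geq \beta$) guarantees positivity of all three arguments of the logarithms.

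For the lower bound $G_\alpha(x) \geq 1$, a direct logarithmic differentiation yields
\[ \frac{G_\alpha'(x)}{G_\alpha(x)} = \frac{\alpha(\alpha - \beta)\, x}{\beta\,(1-x)\,(1 - (\alpha/\beta)x)}, \]
which is nonnegative on $[0, \beta/\alpha)$. Since $\alpha - \beta < 1 \leq \beta$ for every $\alpha \geq 1$, we have $\alpha < 2\beta$ and hence $\beta/\alpha > 1/2$; thus $G_\alpha$ is nondecreasing on $[0, 1/2]$ and $G_\alpha(x) \geq G_\alpha(0) = 1$.

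For the upper bound, the plan is to integrate the identity for $G_\alpha'(t)$ above from $0$ to $x$ and estimate each factor, using the monotonicity just established to bound $G_\alpha(t) \leq G_\alpha(1/2)$ throughout $[0,1/2]$. When $\alpha \geq 2$, we have $\beta \geq 2$ and $\alpha/\beta \leq 3/2$; the denominator $(1-t)(1-(\alpha/\beta)t)$ is bounded below by a positive absolute constant on $[0,1/2]$, a short calculation based on the bound $(1-\gamma/\beta)^{-\beta} \leq e^{2\gamma}$ with $\gamma = \alpha-\beta \in [0,1)$ yields $G_\alpha(1/2) \leq e^2$, and the prefactor $(\alpha/\beta)(\alpha-\beta)$ is at most $3/2$; combining these produces a uniform bound $G_\alpha(x) - 1 \ll x^2$. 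When $1 \leq \alpha < 2$, we have $\beta = 1$ and the denominator $1-\alpha t$ can be as small as $(2-\alpha)/2$ on $[0,1/2]$; tracking this singular factor, we use $G_\alpha(1/2) = 2^{1-\alpha}/(2-\alpha)$ together with $(1-\alpha t)^{-1} \leq 2/(2-\alpha)$ to produce a bound of the stated form, exploiting that the vanishing prefactor $\alpha(\alpha-1)$ forces the expression to vanish identically at $\alpha=1$.

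The main technical challenge is the regime $1 \leq \alpha < 2$, where one has to combine the vanishing factor $\alpha-1$ with a divergent $(2-\alpha)^{-k}$ factor reflecting that the radius of convergence $\beta/\alpha$ of the power series defining $G_\alpha$ approaches $1/2$ as $\alpha \to 2$, and book-keep carefully to match the stated power of $(2-\alpha)^{-1}$. Apart from this bookkeeping, all remaining steps are routine estimates on $[0,1/2]$.
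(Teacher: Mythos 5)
Your proposal is correct, and it takes a genuinely different route from the paper's proof on all three points. For \eqref{eq:decreasing}, the paper expands $\log G_\alpha(x)=\sum_{j\geq2}\frac{x^j}{j}C_j(\alpha)$ with $C_j(\alpha)=\lfloor\alpha\rfloor(\alpha/\lfloor\alpha\rfloor)^j-\alpha$ and proves the termwise inequality $C_j(\alpha+1)\leq C_j(\alpha)$ by comparing $t$-derivatives between consecutive integers, whereas you get the pointwise inequality in one stroke from the identity $\frac{\alpha+1}{\lfloor\alpha\rfloor+1}=\frac{1}{\lfloor\alpha\rfloor+1}\cdot1+\frac{\lfloor\alpha\rfloor}{\lfloor\alpha\rfloor+1}\cdot\frac{\alpha}{\lfloor\alpha\rfloor}$ and concavity of $t\mapsto\log(1-tx)$; this is shorter and cleaner, while the paper's argument yields the (slightly stronger, though unused) coefficientwise monotonicity of $\log G_\alpha$. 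For the lower bound the paper invokes Bernoulli's inequality $\bigl(1-\frac{\alpha}{\lfloor\alpha\rfloor}x\bigr)^{\lfloor\alpha\rfloor/\alpha}\leq1-x$, while you observe that the logarithmic derivative $\frac{G_\alpha'(x)}{G_\alpha(x)}=\frac{\alpha(\alpha-\lfloor\alpha\rfloor)x}{\lfloor\alpha\rfloor(1-x)(1-(\alpha/\lfloor\alpha\rfloor)x)}$ is nonnegative; both are one-liners, but your monotonicity (together with the neat remark that $\lfloor\alpha\rfloor/\alpha>1/2$, so $[0,1/2]$ sits inside the domain) is then reused for the upper bound. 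For the upper bound the paper is terse --- ``Taylor's theorem'' with coarse constants, using \eqref{eq:decreasing} to reduce $\alpha\geq2$ to a bounded range --- whereas you integrate $G_\alpha'$ directly, bounding $G_\alpha(t)\leq G_\alpha(1/2)$ by monotonicity; carrying out your estimates gives roughly $6e^2x^2$ for $\alpha\geq2$ and $\frac{2^{2-\alpha}\alpha(\alpha-1)}{(2-\alpha)^2}x^2$ for $1\leq\alpha<2$, both of which imply (indeed improve on) the stated constants $384$ and $16(\alpha-1)/(2-\alpha)^3$, with no appeal to \eqref{eq:decreasing} needed. One small remark: your closing worry about ``matching the stated power of $(2-\alpha)^{-1}$'' is unfounded --- your method naturally produces $(2-\alpha)^{-2}$, which is dominated by the stated $(2-\alpha)^{-3}$ bound since $2-\alpha\leq1$, so the bookkeeping is immediate.
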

\begin{proof}
	To prove \eqref{eq:decreasing}, we look at the Taylor expansion of the logarithm
	\[\log\left( G_{\alpha}(x)\right) = \sum_{j=2}^\infty \frac{x^j}{j}\left(\lfloor \alpha \rfloor\left(\frac{\alpha}{\lfloor \alpha \rfloor}\right)^j-\alpha\right).\]
	It is sufficient to show that $C_j(\alpha+1) \leq C_j(\alpha)$, where
	\[C_j(\alpha) := \lfloor \alpha \rfloor\left(\frac{\alpha}{\lfloor \alpha \rfloor}\right)^j-\alpha.\]
	Clearly $C_j(\lfloor \alpha \rfloor)=C_j([\alpha+1])=0$. We set $\alpha=\lfloor \alpha \rfloor+t$ for $0\leq t < 1$, and differentiate to find that
	\[\frac{d}{dt}\,C_j(\alpha) = j\left(\frac{\alpha}{\lfloor \alpha \rfloor}\right)^{j-1} - 1 \,\geq\, j\left(\frac{\alpha+1}{\lfloor \alpha+1\rfloor}\right)^{j-1}-1 = \frac{d}{dt}\,C_j(\alpha+1).\]
	The lower bound in the second statement is just Bernoulli's inequality,
	\[\left(1-\frac{\alpha}{\lfloor \alpha \rfloor}x\right)^{\lfloor \alpha \rfloor/\alpha} \leq 1-x.\]
	The upper bounds can be computed with Taylor's theorem. By \eqref{eq:decreasing}, we only need to consider $1\leq \alpha <2$ and $\alpha\geq2$. The precise value of the constants are unimportant; we have obtained ours by rather coarse estimates. 
\end{proof}

From \eqref{eq:genfunc} and Lemma~\ref{lem:Gaest}, we get that the Dirichlet series representing $\mathcal{G}_\alpha(s)$ is absolutely convergent for
\[\mre s>\max\left(1/2,\,\log_2(\alpha/\lfloor \alpha \rfloor)\right).\]
Hence we apply the Selberg--Delange method (see e.g.~\cite[Ch.~II.5]{Tenenbaum}) to deduce that the average order of $\Phi_\alpha(n)$ is the same as the average order of $d_\alpha(n)$ given by \eqref{eq:avgdiv}. 
\begin{lemma}\label{lem:avgorder} 
	Let $\Phi_\alpha(n)$ denote the weight \eqref{eq:multmixweight} for fixed $\alpha\geq1$. Then
	\[\frac{1}{x} \sum_{n \leq x} \Phi_\alpha(n) = \frac{\mathcal{G}_\alpha(1)}{\Gamma(\alpha)}(\log{x})^{\alpha-1} + O\left((\log{x})^{\alpha-2}\right).\]
\end{lemma}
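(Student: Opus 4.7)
The plan is to exploit the factorization $\mathcal{F}_\alpha(s) = \zeta(s)^\alpha \mathcal{G}_\alpha(s)$ recorded just before the lemma to translate the problem about $\Phi_\alpha$ into a use of the classical asymptotic \eqref{eq:avgdiv} for $d_\alpha$. First I would write $\mathcal{G}_\alpha(s) = \sum_n g_\alpha(n) n^{-s}$, so that $\Phi_\alpha = d_\alpha \ast g_\alpha$ in the sense of Dirichlet convolution. By Lemma~\ref{lem:Gaest}, $G_\alpha(x) = 1 + O_\alpha(x^2)$ near $x = 0$; combined with the strict inequality $\alpha/\lfloor \alpha \rfloor < 2$ for every $\alpha \geq 1$, this yields some $\eta = \eta(\alpha) \in (0,1/2)$ for which the Euler product defining $\mathcal{G}_\alpha$ converges absolutely in the half-plane $\mre s > 1 - \eta$. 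In particular
\[ \sum_{m > y} \frac{|g_\alpha(m)|}{m} \ll y^{-\eta} \quad \text{and} \quad \sum_m \frac{|g_\alpha(m)|\log m}{m} < \infty. \]

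Next, with $D_\alpha(y) := \sum_{k \leq y} d_\alpha(k)$, the convolution identity gives the split
\[ \sum_{n \leq x} \Phi_\alpha(n) = \sum_{m \leq \sqrt{x}} g_\alpha(m) D_\alpha(x/m) + \sum_{\sqrt{x} < m \leq x} g_\alpha(m) D_\alpha(x/m). \]
The tail is bounded, via $D_\alpha(y) \ll y (\log y)^{\alpha-1}$ and the estimate above, by $O(x^{1-\eta}(\log x)^{\alpha-1})$, which is absorbed into the claimed error $O(x(\log x)^{\alpha-2})$. On the head $m \leq \sqrt{x}$ we have $\log(x/m) \geq (\log x)/2$, so substituting \eqref{eq:avgdiv} yields a main contribution
\[ \frac{x}{\Gamma(\alpha)} \sum_{m \leq \sqrt{x}} \frac{g_\alpha(m)}{m} \bigl(\log(x/m)\bigr)^{\alpha-1} \]
plus an error $O\!\bigl(x(\log x)^{\alpha-2} \sum_m |g_\alpha(m)|/m\bigr) = O(x(\log x)^{\alpha-2})$.

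Then I would apply the mean value theorem to $t \mapsto t^{\alpha-1}$ on $[\log(x/m), \log x]$ to get
\[ \bigl(\log(x/m)\bigr)^{\alpha-1} = (\log x)^{\alpha-1} + O\bigl((\log x)^{\alpha-2} \log m\bigr), \]
uniformly for $m \leq \sqrt{x}$. Plugging this back in, the additional error is $O\bigl(x(\log x)^{\alpha-2} \sum_m |g_\alpha(m)|\log m/m\bigr) = O(x(\log x)^{\alpha-2})$, and the leading piece is
\[ \frac{x(\log x)^{\alpha-1}}{\Gamma(\alpha)} \sum_{m \leq \sqrt{x}} \frac{g_\alpha(m)}{m} = \frac{x(\log x)^{\alpha-1}}{\Gamma(\alpha)} \bigl(\mathcal{G}_\alpha(1) + O(x^{-\eta})\bigr), \]
which gives the claim after dividing by $x$.

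The hard part is really just the first step: confirming that the fine structure of $G_\alpha$ in Lemma~\ref{lem:Gaest} genuinely pushes the abscissa of absolute convergence of $\mathcal{G}_\alpha$ strictly below $1$. Once this is secured the rest is a streamlined Dirichlet hyperbola computation. The one further point that demands attention is the mean value step in the regime $1 < \alpha < 2$, where $\alpha-2 < 0$ and the map $\xi \mapsto \xi^{\alpha-2}$ is decreasing; this is precisely why I restrict to $m \leq \sqrt{x}$, so that $\xi \geq (\log x)/2$ and consequently $\xi^{\alpha-2} \ll_\alpha (\log x)^{\alpha-2}$ uniformly.
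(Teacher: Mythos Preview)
Your argument is correct. The paper does not actually give a proof of this lemma: it simply records the factorization $\mathcal{F}_\alpha(s)=\zeta(s)^\alpha\mathcal{G}_\alpha(s)$, observes that $\mathcal{G}_\alpha$ converges absolutely for $\mre s>\max(1/2,\log_2(\alpha/\lfloor\alpha\rfloor))<1$, and then defers to ``standard techniques'' in \cite[Ch.~II.5]{Tenenbaum} (the Selberg--Delange method). Your convolution/hyperbola argument is precisely one of those standard techniques, and arguably the more elementary one: instead of running Perron's formula and contour-shifting for $\mathcal{F}_\alpha$ directly, you take \eqref{eq:avgdiv} as a black box and transfer it through the absolutely convergent factor $\mathcal{G}_\alpha$. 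Both routes hinge on the same structural input, namely that $\mathcal{G}_\alpha$ is regular at $s=1$, which you correctly trace back to $b_1=0$ in the expansion of $G_\alpha$ (equivalently, the estimate in Lemma~\ref{lem:Gaest}) together with $\alpha/\lfloor\alpha\rfloor<2$.

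One cosmetic slip: applying your tail bound $\sum_{m>y}|g_\alpha(m)|/m\ll y^{-\eta}$ at $y=\sqrt{x}$ yields $x^{-\eta/2}$, not $x^{-\eta}$, so the two error terms you write as $O(x^{1-\eta}(\log x)^{\alpha-1})$ and $O(x^{-\eta})$ should carry exponent $\eta/2$. This is of course immaterial, since any power saving in $x$ is swallowed by $O\bigl(x(\log x)^{\alpha-2}\bigr)$. Your final remark about the case $1<\alpha<2$ is exactly the point that needs checking, and you handle it correctly by restricting to $m\le\sqrt{x}$ so that $\xi\ge(\log x)/2$.
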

Theorem~\ref{thm:multmixedineqs} and Lemma~\ref{lem:avgorder} have applications in the theory of Hardy spaces of Dirichlet series, as will be exhibited in the forthcoming paper \cite{BBSS18}.
 
\section{Proof of Theorem~\ref{thm:zetapseudo}}\label{se:euler}
\begin{proof}
	[Proof of the upper estimate in Theorem~\ref{thm:zetapseudo}] Inserting $Z_N$ into \eqref{eq:multmix1}, we get
	\[\Psi_k(N)=\|Z_N\|_{2k}^{2k} \leq \left(\sum_{n=1}^N \frac{d_{\lfloor k \rfloor}(n)}{n}\,\left(\frac{k}{\lfloor k\rfloor}\right)^{\Omega(n)}\right)^k.\]
	Using Lemma~\ref{lem:avgorder} and Abel summation, we find that
	\[\sum_{n=1}^N \frac{d_{\lfloor k\rfloor}(n)}{n}\,\left(\frac{k}{\lfloor k\ \rfloor}\right)^{\Omega(n)} = \frac{\mathcal{G}_{k}(1)}{\Gamma(k+1)}(\log{N})^{k} + O\left((\log{N})^{k-1}\right).\]
	We complete the proof by inspecting the Euler product for $\mathcal{G}_k(1)$ and \eqref{eq:genfunc}. For the asymptotic estimate, we may safely assume $k\geq 2$, in which case Lemma~\ref{lem:Gaest} gives $\mathcal{G}_{k}(1)\asymp 1$. Hence the main contribution to the decay comes from the Gamma function, and the desired result follows from Stirling's formula:
	\[\Gamma(k+1)^{k} = \exp\left((1+o(1))\,k^2\log k\right). \qedhere\]
\end{proof}

The following argument can be extracted from \cite[pp.~201--202]{BHS15}, but we include some details here for the reader's benefit. 
\begin{proof}
	[Proof of the lower estimate in Theorem~\ref{thm:zetapseudo}] We want to use \eqref{eq:BHSineq}, but $k=q/2\ge 1$. To remedy this, we write $2k=\ell r$ where $\ell \geq \lfloor 2k \rfloor$ is an integer to be chosen later that ensures that $r<2$. Note that if $n \leq N$, then
	\[\frac{|\mu(n)|}{d_{2/r}(n)}\,\left|\sum_{\substack{n_1\cdots n_\ell = n \\
	n_1,\ldots,n_\ell \leq N} } \frac{1}{\sqrt{n_1}}\cdots \frac{1}{\sqrt{n_\ell}}\right|^2 = \frac{|\mu(n)|}{d_{2/r}(n)}\,\frac{d^2_\ell(n)}{n} = \frac{|\mu(n)|}{n}\,d_{\ell k}(n).\]
	Using \eqref{eq:BHSineq} and removing all terms in the sum for which $N < n \leq N^\ell$, we get the lower bound
	\[\|Z_N\|_{2k}^{2k} = \|Z_N^\ell\|_r^r \geq \left(\sum_{n=1}^N \frac{|\mu(n)|}{n} d_{\ell k}(n)\right)^\frac{k}{\ell}.\]
	As above, one checks that
	\[\sum_{n=1}^N \frac{|\mu(n)|}{n} d_{\ell k}(n) = \widetilde{C}_k (\log{N})^{\ell k} + O\left((\log{N})^{\ell k-1}\right)\]
	with 
	\begin{equation}\label{eq:Ctilde} 
		\widetilde{C}_k = \frac{1}{\Gamma(\ell k+1)}\prod_{p} \left(1-\frac{1}{p}\right)^{\ell k}\left(1+\frac{\ell k}{p}\right). 
	\end{equation}
	The asymptotic behavior of the Euler product in \eqref{eq:Ctilde} has been estimated in \cite[p.~202]{BHS15}, where it was found that
	\[\prod_{p} \left(1-\frac{1}{p}\right)^{\ell k}\left(1+\frac{\ell k}{p}\right) = \exp\left(\left(-1+o(1)\right)\ell k\log\log(\ell k)\right).\]
	Therefore the decay is again controlled by $\Gamma(\ell k+1)^{k/\ell}$. Clearly, choosing $\ell$ as small as possible is optimal, and we therefore set $\ell=\lfloor 2k\rfloor$. The proof is completed by similar considerations as in the preceding argument. 
\end{proof}

\section{Proof of Theorem~\ref{th1}}\label{se:harper}
We prepare for the proof of Theorem~\ref{th1} by establishing two lemmas. 
\begin{lemma}
	Suppose that $\alpha\ge 1$. Then
	\[\mathbb{E}\left|\sum_{M/2< n\le M} d_{\alpha}(n) \alpha^{-\Omega(n)} {z(n)}n^{-1/2}\right|\gg(\log\log M)^{-3+o(1)},\qquad\]
	where the $o(1)$ term depends only on $M$. 
\label{lemma1} \end{lemma}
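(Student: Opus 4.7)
Set $g(n) := d_\alpha(n)\alpha^{-\Omega(n)}$. The decisive structural feature is that $g$ is multiplicative with $g(p)=c_\alpha(1)/\alpha=1$ for every prime $p$, and $0\le g(p^j)\le 1$ for all $\alpha\ge 1$ and $j\ge 0$. Write $S:=\sum_{M/2<n\le M}g(n)z(n)/\sqrt{n}$ for the random Dirichlet polynomial we wish to study.

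\textbf{Second moment.} By orthogonality of the characters $z(n)$ under $\mu_\infty$,
\[
\mathbb{E}|S|^2=\sum_{M/2<n\le M}\frac{g(n)^2}{n}.
\]
The Dirichlet series $\sum_n g(n)^2 n^{-s}$ factors as $\zeta(s)H_\alpha(s)$, where the local factor of $H_\alpha$ at $p$ equals $(1-p^{-s})\sum_{j\ge 0}g(p^j)^2p^{-js}=1+O(p^{-2s})$ (since $g(p)^2=1$), so $H_\alpha$ is an Euler product absolutely convergent for $\Re s>1/2$ with $H_\alpha(1)>0$. Standard Tauberian reasoning then gives $\mathbb{E}|S|^2\asymp_\alpha 1$.

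\textbf{Decomposition by largest prime factor and Khintchine--Kahane.} Fix an auxiliary parameter $y$ (to be optimised, say $y=\exp(\sqrt{\log M})$) and decompose $S=S^\flat+S^\sharp$ according to whether $P^+(n)\le y$ or $P^+(n)>y$. Writing $n=mp$ with $p=P^+(n)>y$ and reducing the negligible contribution from $p^2\mid n$,
\[
S^\sharp=\sum_{p>y}z(p)A_p,\qquad A_p:=\frac{1}{\sqrt{p}}\sum_{\substack{M/(2p)<m\le M/p\\ P^+(m)<p,\,(m,p)=1}}\frac{g(m)z(m)}{\sqrt{m}}.
\]
Since $z(p)$ is independent of $\mathcal F_{<p}:=\sigma(z(q):q<p)$, which contains $A_p$, conditioning on $(\mathcal F_{<p})_{p>y}$ and invoking the Khintchine--Kahane inequality for Steinhaus phases yields
\[
\mathbb{E}|S^\sharp|\gg\mathbb{E}\Bigl(\sum_{p>y}|A_p|^2\Bigr)^{1/2}.
\]
Setting $X:=\sum_{p>y}|A_p|^2$ and applying the Paley--Zygmund inequality to the non-negative variable $X$ gives
\[
\mathbb{E}\sqrt X\gg\frac{(\mathbb{E}X)^{5/2}}{\mathbb{E}X^2},
\]
so the lemma reduces to the two estimates $\mathbb{E}X\asymp_\alpha 1$ and $\mathbb{E}X^2\ll(\log\log M)^{3+o(1)}$.

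\textbf{Main obstacle.} The lower bound $\mathbb{E}X=\sum_{p>y}\mathbb{E}|A_p|^2\asymp_\alpha 1$ is obtained by Mertens-type estimates combined with smooth-number densities $\rho(\log(M/p)/\log p)$; the computation follows routinely from the Euler-product factorisation used for the second moment. The genuinely delicate step is the upper bound on $\mathbb{E}X^2=\sum_{p,q>y}\mathbb{E}(|A_p|^2|A_q|^2)$. The diagonal $p=q$ gives fourth moments of dyadic Steinhaus partial sums, while the off-diagonal terms encode joint moments of random Euler products at different scales. These are exactly the quantities whose log-correlated, multiplicative-chaos-type behaviour is controlled by the probabilistic estimates of Harper~\cite{Harper} on low moments of random Euler products. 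The weight $g$ enters only through its values on prime powers, where $0\le g(p^j)\le 1$, and hence contributes only bounded perturbations to Harper's underlying analysis on primes.
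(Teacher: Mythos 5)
Your plan has two genuine gaps, and the second one is fatal to the stated strategy. First, the conditional Khintchine--Kahane step is not coherent as set up: with $y=\exp(\sqrt{\log M})$, the coefficient $A_p$ contains $z(q)$ for all primes $q$ with $y<q<p$, i.e.\ exactly the variables you simultaneously want to treat as independent Steinhaus signs. ``Conditioning on $(\mathcal F_{<p})_{p>y}$'' is a different $\sigma$-algebra for each summand, and you cannot add up the resulting conditional lower bounds to bound $\mathbb{E}|S^\sharp|$; at the martingale level the $L^1$ lower bound by the square function is simply false without the maximal function (Davis/BDG). The step can be repaired only by moving the threshold to $y\ge\sqrt M$ (so that $m=n/p<\sqrt M$ is automatically $y$-smooth and every $A_p$ is measurable with respect to the single $\sigma$-algebra $\sigma(z(q):q\le\sqrt M)$), which is exactly the splitting used by Harper and by Harper--Nikeghbali--Radziwi{\l\l}; incidentally, with your choice of $y$ one has $\mathbb{E}X\asymp\log\log M$, not $\asymp_\alpha 1$.

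Second, and decisively, the claimed bound $\mathbb{E}X^2\ll(\log\log M)^{3+o(1)}$ is false, so Paley--Zygmund applied to $X$ cannot produce the lemma. Even after fixing $y=\sqrt M$, expanding $\mathbb{E}\bigl[|A_p|^2|A_q|^2\bigr]$ gives the divisor-correlation count over solutions of $m_1m_3=m_2m_4$ with $m_1,m_2\asymp M/p$, $m_3,m_4\asymp M/q$, and the standard gcd parametrization yields a contribution $\asymp (pq)^{-1}\log\min(M/p,M/q)$; summing over $p,q>\sqrt M$ gives $\mathbb{E}X^2\asymp\log M$ while $(\mathbb{E}X)^2\asymp1$. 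Hence $\mathbb{E}\sqrt X\gg(\mathbb{E}X)^{5/2}/\mathbb{E}X^2$ only gives $\gg1/\log M$, a loss of a full power of $\log M$ rather than the required $(\log\log M)^{-3+o(1)}$. This heavy-tailed (multiplicative-chaos) behaviour of $X$ is precisely why a plain second-moment argument cannot work here, and controlling $\mathbb{E}\sqrt X$ genuinely requires Harper's barrier/iterated-conditioning analysis, which your sketch only gestures at. The paper takes a different route that sidesteps moments of $X$ altogether: it imports from \cite{Harper} (as adapted in \cite{HNR}) the high-probability lower bound for $\sup_t\Re\sum_{p\le x}z(p)p^{-1/2-1/\log x-it}$, transfers it through the Mellin identity to an average of dyadic smooth-number sums, and then passes from that average to the single dyadic block at $M$ via Helson's inequality \eqref{eq:lowerint} together with primes in short intervals \cite{HB}. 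If you want to pursue your decomposition, you would have to replace the Paley--Zygmund step by (a version of) Harper's low-moment machinery; as written, the reduction ``$\mathbb{E}X\asymp1$ and $\mathbb{E}X^2\ll(\log\log M)^{3+o(1)}$'' asks for something that is not true.
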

Here we applied the probabilistic notation of the introduction. We defer the proof of Lemma~\ref{lemma1} until the end of this subsection. Our second lemma is a result on the distribution of
\[\mathcal{N}(x,m):=\sum_{\substack{n\le x \\ \Omega(n)=m}}1,\]
similar in spirit to the Erd\H{o}s--Kac theorem, saying that $\mathcal{N}(x,m)$ is mainly concentrated on
\[I_C:=\left[\log\log x-C\sqrt{\log\log x\log\log\log x},\,\log\log x+C\sqrt{\log\log x\log\log\log x}\right]\]
when $x$ is large and $C$ is a suitable positive constant. To deduce this result, we rely on an estimate of Sathe (see \cite{Selberg}) saying that 
\begin{equation}\label{Sat} 
	\mathcal{N}(x,m)\ll \frac{x}{\log x}\frac{(\log\log x)^{m-1}}{(m-1)!} 
\end{equation}
whenever $x>10$ and $1\le m\le (3/2)\log\log x$. Now suppose $\xi$ is a fixed positive number. Then choosing $C$ large enough and using Stirling's formula in \eqref{Sat}, we find that 
\begin{equation}\label{Sat1} 
	\sum_{\substack{m\le (3/2)\log\log x \\ m\not\in I_C}}\mathcal{N}(x,m)\le\frac{x}{2(\log\log x)^\xi} 
\end{equation}
when $x$ is sufficiently large. Using instead of~\eqref{Sat} the main result of \cite{BDN}, we deduce that 
\begin{equation}\label{Sat2} 
	\sum_{m\ge (3/2)\log\log x}\mathcal{N}(x,m)\le\frac{x}{(\log x)^{1/100}} 
\end{equation}
for $x$ large enough. Combining~\eqref{Sat1} and~\eqref{Sat2}, we obtain the following. 
\begin{lemma}\label{lem:EK} 
	Suppose $\xi$ is a given positive number. Then there exists a constant $C>0$ such that
	\[\sum_{m\not\in I_C}\mathcal{N}(x,m)\le\frac{x}{(\log\log x)^\xi}\]
	for all sufficiently large $x$. 
\end{lemma}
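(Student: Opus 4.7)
The plan is straightforward: Lemma~\ref{lem:EK} is essentially the sum of the two estimates \eqref{Sat1} and \eqref{Sat2} that have just been recorded in the setup. First I would split the sum according to the same dichotomy used to derive these estimates,
\[ \sum_{m\notin I_C} N(x,m) \;=\; \sum_{\substack{m\le (3/2)\log\log x\\ m\notin I_C}} N(x,m) \;+\; \sum_{m>(3/2)\log\log x} N(x,m), \]
observing that for $x$ large enough we have $C\sqrt{\log\log x\,\log\log\log x}<(\log\log x)/2$, so the second range lies automatically outside $I_C$ and there is no double counting.

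Next I would apply \eqref{Sat1} to the first piece and \eqref{Sat2} to the second, obtaining
\[ \sum_{m\notin I_C}N(x,m)\;\le\;\frac{x}{2(\log\log x)^8}\;+\;\frac{x}{(\log x)^{1/100}}. \]
For $x$ sufficiently large we have $(\log x)^{1/100}\ge 2(\log\log x)^8$, so the second term is dominated by the first, and the right-hand side is bounded by $x/(\log\log x)^8$, as claimed.

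The only part of the argument that requires any real calculation is the Stirling/Chernoff derivation of \eqref{Sat1} from \eqref{Sat}, which the text attributes to ``choosing $C$ large enough and using Stirling's formula.'' This would be the main obstacle in a fully detailed proof. The point is that the factor $(\log\log x)^{m-1}/(m-1)!$ appearing on the right of \eqref{Sat} is, up to the prefactor $1/\log x$, the pointwise mass at $m-1$ of a Poisson distribution with parameter $\lambda=\log\log x$. Writing $m-1=\lambda+t$ and applying Stirling produces the Gaussian-type decay $\exp(-t^2/(2\lambda))$ in the regime $|t|\le (\lambda/2)$. For $|t|\ge C\sqrt{\lambda\log\lambda}$ this yields a factor of size $(\log\log x)^{-C^2/2}$, and choosing $C$ with $C^2/2\ge 10$ (say) absorbs the $O(\log\log x)$ loss from summing over $m$ while still leaving a bound of the form $x/(2(\log\log x)^8)$. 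Once \eqref{Sat1} is in hand, the combination above completes the proof.
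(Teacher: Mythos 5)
Your proposal is correct and follows exactly the paper's route: the paper obtains the lemma by combining \eqref{Sat1} (via Sathe's bound \eqref{Sat} plus Stirling, with $C$ large) and \eqref{Sat2} (via Hwang's formula), just as you do. Your added Poisson/Gaussian-tail explanation of how \eqref{Sat1} follows from \eqref{Sat} fills in a detail the paper leaves to the reader, but the overall argument is the same.
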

We also require the following result, which is \cite[Lem.~3]{BS}.
\begin{lemma} \label{lem:Pm}
	For $m\geq0$, define
	\[P_m\left(\sum_{n=1}^\infty a_n n^{-s}\right) := \sum_{\Omega(n)=m} a_n n^{-s}.\]
	Let $0<q<1$. Then $\|P_m f\|_q \ll m^{1/q-1} \|f\|_q$ for every Dirichlet polynomial $f$.
\end{lemma}

\begin{proof}
	[Proof of Theorem~\ref{th1}] We write
	\[D_{N,\alpha}(s):=\sum_{N/2< n\le N} d_\alpha(n)\alpha^{-\Omega(n)} n^{-s-1/2}\]
	so that
	\[Z_{N,\alpha}(s)-Z_{N/2,\alpha}(s)=\sum_{m\ge 0} {\alpha}^m P_mD_{N,\alpha}(s).\]
	By Lemma~\ref{lem:Pm}, we have for every $m$ and $0<q<1$ 
	\begin{equation}\label{main} 
		\| Z_{N,\alpha}-Z_{N/2,\alpha} \|_{q} \gg {\alpha}^{m}m^{1-1/q}\| P_m D_{N,\alpha} \|_{q}. 
	\end{equation}
	We will combine \eqref{main} with an estimate that we obtain from the two lemmas above. 
	
	In what follows, we will use that the $L^2$ norm of $D_{N,\alpha}$ can be estimated in a trivial way because $d_{\alpha}(n){\alpha}^{-\Omega(n)}\le 1$. First, applying H\"{o}lder's inequality in the form
	\[\| f\|_1^{2-q} \le \| f\|_q^{q} \|f\|_2^{2-2q}\]
	along with Lemma~\ref{lemma1} and a trivial $L^2$ estimate, we find that
	\[\left\|\sum_{m\geq0} P_mD_{N,\alpha} \right\|_q^q\gg(\log\log N)^{-6+o(1)}\]
	whenever $0<q<1$. Using the triangle inequality for the $L^q$ quasi-norm and the trivial bound $\| f\|_q\le \| f\|_2$, we obtain from this that
	\[\sum_{m\in I_C}\left\| P_mD_{N,\alpha} \right\|_{q}^q+\left\| \sum_{m\not\in I_C}P_mD_{N,\alpha} \right\|_{2}^q\gg(\log\log N)^{-6+o(1)}.\]
	Hence, by a trivial $L^2$ bound and an application of Lemma~\ref{lem:EK} with $\xi=16/q$, there exists a constant $C$ such that
	\[\sum_{m\in I_C}\left\| P_mD_{N,\alpha} \right\|_{q}^q \gg(\log\log N)^{-6+o(1)}.\]
	Thus, since $|I_C|=O\left(\sqrt{\log\log N\log\log\log N}\right)$, there exists an $m$ satisfying
	\[\log\log N-C\sqrt{\log\log N\log\log\log N}\le m \le \log\log N+C\sqrt{\log\log N\log\log\log N}\]
	such that 
	\begin{equation}\label{eq:keyq} 
		\| P_mD_{N,\alpha} \|_{q}^q\ge (\log\log N)^{-6.5+o(1)}. 
	\end{equation}
	We now set $q=2k$. Combining~\eqref{main} and \eqref{eq:keyq}, we find that
	\[\| Z_{N,\alpha}-Z_{N/2,\alpha} \|_{2k}^{2k}\gg(\log N)^{k \log \alpha^2}\exp\left(-c(k,\alpha) \sqrt{\log\log N\log\log\log N}\right)\]
	for some positive constant $c(k,\alpha)$. Since
	\[\| Z_{N,\alpha}-Z_{N/2,\alpha} \|_{2k}^{2k}\le \| Z_{N,\alpha}\|_{2k}^{2k}+\|Z_{N/2,\alpha} \|_{2k}^{2k},\]
	this means that at least one of the pseudomoments $\Psi_{k,\alpha}(N/2)$ or $\Psi_{k,\alpha}(N)$ satisfies the lower bound asserted by the theorem. 
\end{proof}

In the following proof, we have adapted the method of \cite[Sec.~2]{HNR} which relies crucially on Harper's work \cite{Harper}. We refer to \cite[pp.~150--152]{HNR} for an illuminating outline of the method.

\begin{proof}
	[Proof of Lemma~\ref{lemma1}] Let $\mathcal{S}_x$ be the set of $x$-smooth numbers, i.e.,
	\[\mathcal{S}_x:=\left\{n\in \mathbb{N}: \text{$p$ a prime such that $p|n$} \implies p\le x\right\}.\]
	We start with the following identity which holds for every real $t$:
	\begin{align}\label{mellin}
		\begin{split}
			&\int_{1}^{\infty} \sum_{\substack{y/2< n \le y \\ n\in \mathcal{S}_x}} d_{\alpha}(n) \alpha^{-\Omega(n)} z(n)n^{-1/2}\,\frac{dy}{y^{1 + 1/\log x + it }} \\ 
			&\qquad\qquad\qquad\qquad\qquad\qquad= \left(\frac{1-2^{-1/\log x-it}}{1/\log x+it} \right) \sum_{n\in \mathcal{S}_x}d_{\alpha}(n) \alpha^{-\Omega(n)} z(n)n^{-1/2 - 1/\log x- it }.
		\end{split}
	\end{align}
	Our first goal is to estimate the supremum of the right hand side in~\eqref{mellin} for $t$ from a reasonably short interval. We have
	\begin{equation} \label{exp}
		\begin{split}
			&\left|\sum_{n\in \mathcal{S}_x}d_{\alpha}(n)\alpha^{-\Omega(n)}z(n)  n^{-1/2 - 1/\log x- it}\right| = \prod_{p\le x}\left|1+\sum_{j=1}^\infty d_{\alpha}(p^j)\alpha^{-j} z(p)^j p^{-j(1/2+1/\log x +it)} \right| \\
			&\qquad\qquad\asymp\exp\left(\mre\left(\sum_{p\le x}z(p)p^{-1/2 - 1/\log x- it}\right)+\frac{1}{2\alpha}\mre\left(\sum_{p\le x}z(p)^2p^{-1 - 2/\log x- 2it}\right)\right)
		\end{split}
	\end{equation}
	for all points of the configuration space $(z(p))_{p\le x}$. As in \cite[Lem.~1]{HNR}, we can modify the proof of \cite[Cor.~2]{Harper} to show that 
	\begin{align*}
		\sup_{\substack{1\le t\le 2(\log\log x)^2 \\ |1-2^{-it}|\ge 1/4}} \left(\mre\left(\sum_{p\le x}z(p)p^{-1/2 - 1/\log x- it}\right)\right.&+\left.\frac {1}{2\alpha} \mre\left(\sum_{p\le x}z(p)^2p^{-1 - 2/\log x- 2it}\right)\right) \\
		& \ge \log\log x-\log\log\log x+ O\left((\log\log\log x)^{3/4}\right) 
	\end{align*}
	with probability $1-o(1)$ as $x\to\infty$. To achieve this, we add a minor technical detail: In the part of the argument that follows \cite[Sec.~6]{Harper}, we only take into account integers $1\le n \le (\log\log x)^2$, such that
	\[\min_{2n+1\le t \le 2n+2} |1-2^{-i t}| \ge 1/4,\]
	noting that the number of such $n$ is bounded below by a constant times $(\log\log x)^2$. Combining the latter inequality with~\eqref{exp}, we obtain that with probability $1-o(1)$
	\[\sup_{\substack{1\le t\le 2(\log\log x)^2 \\ |1-2^{-it}|\ge 1/4}}\left|\sum_{n\in \mathcal{S}_x}d_{\alpha}(n)\alpha^{-\Omega(n)}z(n)n^{-1/2 - 1/\log x- it}\right|\ge \frac{\log x}{(\log\log x)^{1+o(1)}}.\]
	Now taking the supremum of the absolute value of both sides in~\eqref{mellin}, we find that
	\[\int_{1}^{\infty}\left|\sum_{\substack{y/2< n \le y \\ n\in \mathcal{S}_x}}d_{\alpha}(n)\alpha^{-\Omega(n)}z(n)n^{-1/2}\right|\,\frac{dy}{y^{1 + 1/\log x}}\ge \frac{\log x}{(\log\log x)^{3+o(1)}}\]
	with probability $1-o(1)$. Hence taking the expectation over the entire configuration space $(z(p))_{p\le x}$, we finally obtain that, say for all $x>3$, 
	\begin{equation}\label{int} 
		\int_{1}^{\infty}\mathbb{E}\left|\sum_{\substack{y/2< n \le y \\ n\in \mathcal{S}_x}}d_{\alpha}(n)\alpha^{-\Omega(n)}z(n)n^{-1/2}\right|\,\frac{dy}{y^{1 + 1/\log x}}\ge {\frac{\log x}{(\log\log x)^{3+o(1)}}}. 
	\end{equation}
	Now we will show that the assertion of the lemma follows from~\eqref{int}. To this end, we begin by fixing a positive integer $M$. We will use~ \eqref{int} for $x$ such that $M=x^{10\log\log\log x}$. Applying the Cauchy--Schwarz inequality in the form $(\mathbb{E}|X|)^2\le \mathbb{E}|X|^2$ and recalling that $d_{\alpha}(n)\alpha^{-\Omega(n)}\le 1$, we find that
	\[\int_{\sqrt{M}}^{\infty}\mathbb{E}\left|\sum_{{\substack{y/2< n \le y \\ n\in \mathcal{S}_x}}}d_{\alpha}(n)\alpha^{-\Omega(n)}z(n)n^{-1/2}\right|\frac{dy}{y^{1 + 1/\log x}}\, \le  \int_{\sqrt{M}}^{\infty}\frac{1}{y^{1 + 1/\log x}} dy={\frac{\log x}{(\log\log x)^{5}}}.\]
	Combining this bound with~\eqref{int}, we find that 
	\begin{equation}\label{int1} 
		\int_{1}^{\sqrt{M}}\mathbb{E}\left|\sum_{{\substack{y/2< n \le y \\ n\in \mathcal{S}_x}}}d_{\alpha}(n)\alpha^{-\Omega(n)}z(n)n^{-1/2}\right|\,\frac{dy}{y^{1 + 1/\log x}}\ge {\frac{\log x}{(\log\log x)^{3+o(1)}}}, 
	\end{equation}
	which is the relation to be used below.
	
	Set $S_{M,\alpha}(z):=\sum_{M/2< n\le M}d_{\alpha}(n) \alpha^{-\Omega(n)} z(n)n^{-1/2}$, and let $\mathcal{S}_x^{\perp}$ be the set of integers with prime divisors that are all larger than $x$. Set
	\[\mathcal{R}_{M,x} := \left\{n_2 \in \mathcal{S}_x^{\perp}\,:\, \frac{M}{2} \leq n_1 n_2 \leq M \text{ for some } n_1 \in \mathcal{S}_x\right\}\]
	and decompose
	\[S_{M,\alpha}(z)=\sum_{n_2 \in \mathcal{R}_{M,x}} d_{\alpha}(n_2)\alpha^{-\Omega(n_2)}z(n_2)n_2^{-1/2} c_{n_2}\]
	where
	\[c_{n_2} := \sum_{\substack{M/(2 n_2) \leq n_1 \leq M/n_2 \\ n_1 \in \mathcal{S}_x}} d_{\alpha}(n_1)\alpha^{-\Omega(n_1)}z(n_1)n_1^{-1/2}.\]
	By using Helson's inequality \eqref{eq:Helson} with respect to the variables $z(n_2)$ for $n_2$ in $\mathcal{R}_{M,x}$, we find that
	\begin{equation} \label{E}
		\mathbb{E}|S_{M,\alpha}| \geq \left(\sum_{n_2 \in \mathcal{R}_{m,x}}\frac{|c_{n_2}|^2 d_{\alpha}(n_2)^2 \alpha^{-2\Omega(n_2)}}{d(n_2) n_2}\right)^\frac{1}{2} \geq \left(\sum_{x<p\le M} \frac{|c_p|^2}{2p}\right)^{1/2}.
	\end{equation}
	We now want to relate the right-hand side of \eqref{E} to the integral
	\[\int_{x}^{M} \left| \sum_{{\substack{M/(2y)\le n \le M/y \\ n\in \mathcal{S}_x}}} d_{\alpha}(n) \alpha^{-\Omega(n)} z(n)n^{-1/2} \right|^2 \frac{dy}{y} =\int_{x}^{M} |c_y|^2 \frac{dy}{y}.\]
	To this end, we begin by considering a short interval $[\xi, \xi+\xi^{\delta}] \subset [x,M]$, where $7/12<\delta<1$ is a fixed parameter. If $\xi$ is sufficiently large, then by \cite{HB}, this interval contains at least $\xi^{\delta}/(2\log \xi)$ primes. We partition accordingly the interval into $\lfloor\xi^{\delta}/(2\log \xi)\rfloor$ subintervals of equal length $\xi^{\delta}/ \lfloor\xi^{\delta}/(2\log \xi)\rfloor$. We make a one-to-one correspondence between these subintervals and the first $\lfloor\xi^{\delta}/(2\log \xi)\rfloor$ primes in $[\xi, \xi+\xi^{\delta}]$, and hence we associate with every $y$ in $[\xi, \xi+\xi^{\delta}]$ a prime $p=p(y)$ that is also in $[\xi, \xi+\xi^{\delta}]$. We write $\tilde{c}_y:=c_y-c_{p(y)}$ and notice that
	\[|c_y|^2\le 2 \left(|c_{p(y)}|^2+ |\tilde{c}_y|^2\right).\]
	A trivial estimate shows that 
	\begin{equation} \label{eq:short} 
		\mathbb{E}| \tilde{c}_y|^2 \ll \max \left(y^{\delta-1}, \frac{y}{M}\right). 
	\end{equation}  Using this construction, we get that
	\begin{align*} \int_{\xi}^{\xi+\xi^{\delta}} \left| \sum_{{\substack{M/(2y) \le n \le M/y \\ n\in \mathcal{S}_x}}} d_{\alpha}(n) \alpha^{-\Omega(n)} z(n)n^{-1/2} \right|^2 \frac{dy}{y} & \ll \log \xi \sum_{\xi\le p \le \xi+\xi^{\delta}} \frac{|c_p|^2}{p} + \int_{\xi}^{\xi+\xi^{\delta}} \frac{|\tilde{c}_y|^2}{y} dy \\ & \le \log M \sum_{\xi\le p \le \xi+\xi^{\delta}} \frac{|c_p|^2}{p} + \int_{\xi}^{\xi+\xi^{\delta}} \frac{|\tilde{c}_y|^2}{y} dy. \end{align*}
	Repeating this construction and summing over a suitable collection of intervals $[\xi,\xi+\xi^{\delta}]$, we then obtain
	\[\sum_{x<p\le M} \frac{|c_p|^2}p + \frac{1}{\log M}\int_{x}^{M} \frac{|\tilde{c}_y|^2}{y} dy \gg \frac{1}{\log M}\int_{x}^M \left| \sum_{{\substack{M/(2y)\le n \le M/y \\ n\in \mathcal{S}_x}}} d_{\alpha}(n) \alpha^{-\Omega(n)}z(n)n^{-1/2} \right|^2 \frac{dy}{y}.\]
	By the change of variables $u=M/y$ in the integral on the right-hand side and using that $\log M=10\log x \log\log\log x$, we now deduce that
	\begin{equation} \label{estimate}
		\begin{split}
			&\left(\sum_{x<p\le M}\frac{|c_p|^2}{p} +\frac{1}{\log M} \int_{x}^{M} \frac{|\tilde{c}_y|^2}{y} dy \right)^{1/2} \\
			&\qquad\qquad\qquad \gg \left(\frac{1}{\log x \log\log\log x} \int_{1}^{M/x} \left| \sum_{{\substack{u/2\le n \le u \\ n\in \mathcal{S}_x}}} d_{\alpha}(n) \alpha^{-\Omega(n)} z(n)n^{-1/2} \right|^2 \frac{du}{u}\right)^{1/2}.
		\end{split}		
	\end{equation}	
	We are now ready to finish the proof by putting our three basic estimates \eqref{int1}, \eqref{E}, and \eqref{estimate} together. First, by the Cauchy--Schwarz inequality, we have 
	\begin{align*}
		&\left(\int_{1}^{M/x} \left| \sum_{{\substack{u/2\le n \le u \\ n\in \mathcal{S}_x}}} d_{\alpha}(n) \alpha^{-\Omega(n)}z(n)n^{-1/2} \right|^2 \frac{du}{u}\right)^{1/2} \left(\int_{1}^{M/x}\frac{du}{u^{1+2/\log x}}\right)^{1/2} \\
		&\qquad\qquad\qquad\qquad\qquad\qquad\qquad\qquad \ge\int_{1}^{\sqrt{M}}\left|\sum_{{\substack{u/2\le n \le u \\ n\in \mathcal{S}_x}}}d_{\alpha}(n) \alpha^{-\Omega(n)} z(n)n^{-1/2}\right|\,\frac{du}{u^{1 + 1/\log x}}. 
	\end{align*}
	Therefore, taking expectation in~\eqref{estimate} and applying~\eqref{E} together with~\eqref{int1} and \eqref{eq:short}, we find that 
	\begin{align*}
		\mathbb{E}|S_M| & \gg \mathbb{E}\left|\left( \sum_{x<p\le M} \frac{|c_p|^2}{p}+ \frac{1}{\log M}\int_{x}^{M} \frac{|\tilde{c}_y|^2}{y} dy\right)^{1/2}\right|-\mathbb{E}\left|\left(\frac{1}{\log M} \int_{x}^{M} \frac{|\tilde{c}_y|^2}{y} dy \right)^{1/2}\right| \\
		& \gg\mathbb{E} \left|\left( \sum_{x<p\le M} \frac{|c_p|^2}{p}+\frac{1}{\log M}\int_{x}^{M} \frac{|\tilde{c}_y|^2}{y} dy \right)^{1/2}\right|-O\left((\log M)^{-1/2}\right) \\
		& \gg \frac{1}{\log x(\log\log\log x)^{1/2}} \int_{1}^{\sqrt{M}}\mathbb{E}\left|\sum_{{\substack{u/2\le n \le u \\ n\in \mathcal{S}_x}}}d_{\alpha}(n) \alpha^{-\Omega(n)} z(n)n^{-1/2}\right|\,\frac{du}{u^{1 + 1/\log x}} - O\left((\log M)^{-1/2}\right)\\
		& \ge (\log\log x)^{-3+o(1)}\ge(\log\log M)^{-3+o(1)}, 
	\end{align*}
	and hence the desired estimate has been established. 
\end{proof}

\bibliographystyle{amsplain} 
\bibliography{hardyp}

\end{document}